\documentclass{amsart}

\usepackage{amssymb, amsmath}
\usepackage{mathrsfs}
\usepackage{amscd}
\usepackage{verbatim}




\theoremstyle{plain}
\newtheorem{theorem}{Theorem}
\theoremstyle{remark}
\newtheorem{remark}[theorem]{Remark}

\theoremstyle{plain}

\newtheorem{lemma}[theorem]{Lemma}
\newtheorem{proposition}[theorem]{Proposition}



\def\R{{\mathbb R}}


\newcommand{\E}{{\mathbb E}}
\renewcommand{\P}{{\mathbb P}}


\renewcommand{\O}{\Omega}



\newcommand{\one}{{{\bf 1}}}

\newcommand{\lb}{\langle}
\newcommand{\rb}{\rangle}

\begin{document}

\title
[Khintchine inequalities with weights]{On Khintchine inequalities with a
weight}

\begin{abstract}
In this note we prove a weighted version of the Khintchine inequalities.
\end{abstract}

\author{Mark Veraar}
\address{Delft Institute of Applied Mathematics\\
Delft University of Technology \\ P.O. Box 5031\\ 2600 GA Delft\\The
Netherlands} \email{mark@profsonline.nl, M.C.Veraar@tudelft.nl}


\subjclass[2000]{Primary: 60E15 Secondary: 60G50}

\keywords{Khintchine inequalities, weight}

\maketitle

Let $(\O, \mathcal{A}, \P)$ be a probability space and let $(r_n)_{n\geq 1}$ be
a Rademacher sequence. For a random variable $\xi:\O\to \R$ and $p>0$ we write $\|\xi\|_p = (\E|\xi|^p)^{1/p}$.
Our main result is the following weighted version of Khintchine's
inequality. We also allow the weight to be zero on a set of positive measure.

\begin{theorem}\label{thm:main}
Let $0<p<\infty$ and let $w\in L^{q}(\O)$ for some $q>p$, and assume $s:=\P(w\neq 0)>2/3$. Let $\xi = \sum_{n\geq 1} r_n x_n$ with $\sum_{n\geq 1} x_n^2<\infty$.
Then there exist
constants $C_1:=C_1(p,w), C_2:= C_2(p,w)>0$ such that
\begin{equation}\label{eq:khintchineweigth}
 C^{-1}_1
\Big(\sum_{n\geq 1} x_n^2\Big)^{\frac12} \leq \|w\xi\|_p \leq C_2
\Big(\sum_{n\geq 1} x_n^2\Big)^{\frac12}.
\end{equation}
Consequently, the $p$-th moments for $0<p<q$ are all comparable.
\end{theorem}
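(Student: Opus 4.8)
Write $\sigma := \big(\sum_{n\ge1}x_n^2\big)^{1/2}$, so that by the classical unweighted Khintchine inequalities $\|\xi\|_r$ is comparable to $\sigma$ for every $0<r<\infty$, and $\E\xi^2=\sigma^2$. The two sides of \eqref{eq:khintchineweigth} call for different ideas, and the two hypotheses $w\in L^q$ and $s>2/3$ enter in the upper and lower bounds respectively. For the upper bound I would combine H\"older's inequality with classical Khintchine: since $q>p$, applying H\"older to $\E|w\xi|^p=\E(|w|^p|\xi|^p)$ with the conjugate exponents $q/p$ and $q/(q-p)$ gives $\|w\xi\|_p\le\|w\|_q\,\|\xi\|_r$ with $r=pq/(q-p)$, and bounding $\|\xi\|_r\le C(r)\sigma$ yields the right-hand inequality with $C_2:=C(r)\|w\|_q$.

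The lower bound is the heart of the matter. The obstruction is that $w$ may vanish on a set of measure $1-s$, on which $w\xi$ reveals nothing about $\xi$; the resolution is that a Rademacher sum is spread out enough to stay bounded away from $0$ on a set too large to be destroyed by removing $\{w=0\}$, provided that set has measure below $1/3$ --- which is exactly the meaning of $s>2/3$. Concretely, I would first use $\P(w\ne0)=s>2/3$ and continuity of measure to fix $\delta>0$ with $t:=\P(|w|\ge\delta)>2/3$, and put $\eta:=t-\tfrac23>0$. On $\{|w|\ge\delta\}$ one has $|w\xi|^p\ge\delta^p|\xi|^p$, so it suffices to prove an anti-concentration estimate guaranteeing $\E\big(\one_{\{|w|\ge\delta\}}|\xi|^p\big)\gtrsim\sigma^p$.

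For this small-ball estimate I would apply the Paley--Zygmund inequality to $\xi^2$, together with the fourth-moment bound $\E\xi^4=3\sigma^4-2\sum_n x_n^4\le 3\sigma^4$ (a direct computation from $\E r_i=\E r_i^3=0$ and $\E r_i^2=\E r_i^4=1$), to get
\[
  \P\big(|\xi|>\lambda\sigma\big)\ge(1-\lambda^2)^2\,\frac{(\E\xi^2)^2}{\E\xi^4}\ge\frac{(1-\lambda^2)^2}{3}
\]
for every $\lambda\in(0,1)$. Intersecting with $\{|w|\ge\delta\}$ costs at most $\P(|w|<\delta)=\tfrac13-\eta$, so
\[
  \P\big(\{|w|\ge\delta\}\cap\{|\xi|>\lambda\sigma\}\big)\ge\frac{(1-\lambda^2)^2}{3}-\Big(\frac13-\eta\Big),
\]
whose right-hand side tends to $\eta>0$ as $\lambda\downarrow0$; here is precisely where $t>2/3$ is needed, dovetailing with the constant $1/3$ coming from the fourth moment. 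Fixing $\lambda$ small enough that this probability is at least $\eta/2$, I conclude $\E|w\xi|^p\ge(\delta\lambda\sigma)^p\,\tfrac{\eta}{2}$, i.e.\ the left-hand inequality with $C_1^{-1}:=\delta\lambda(\eta/2)^{1/p}$. The anti-concentration step is the only real obstacle; the comparability of the moments for $0<p<q$ is then immediate, since \eqref{eq:khintchineweigth} makes every such $\|w\xi\|_p$ comparable to $\sigma$.
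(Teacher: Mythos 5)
Your proof is correct. The upper bound is identical to the paper's (H\"older with $\frac1p=\frac1q+\frac1r$ followed by the unweighted Khintchine inequality). For the lower bound you and the paper rely on the same two quantitative ingredients --- the Paley--Zygmund inequality combined with $\E\xi^4\le 3(\E\xi^2)^2$, which produces the small-ball constant $(1-\lambda^2)^2/3$, and the fact that $\P(|w|\ge\delta)>2/3$ for $\delta$ small --- but you assemble them in the opposite direction. The paper argues by contraposition: it normalizes $\|w\xi\|_p=1$ (which requires a separate lemma guaranteeing $\P(w\xi\ne 0)>0$), applies Chebyshev's inequality on the set $A=\{w\ge\delta_0\}$ to convert the weighted moment into a tail bound $\P(|\xi|>t)\le b$ with $b>1-s$, and then invokes an $L^0$-version of Khintchine's inequality (Proposition \ref{prop:KhintchineL0}) to conclude $\sum_{n\ge1} x_n^2\le t^2/a^2$. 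You instead run the estimate forward: Paley--Zygmund gives the anti-concentration bound $\P(|\xi|>\lambda\sigma)\ge(1-\lambda^2)^2/3$, intersecting with $\{|w|\ge\delta\}$ leaves probability at least $\eta/2>0$ once $\lambda$ is small (this is exactly where $s>2/3$ meets the constant $1/3$ from the fourth moment, just as in the paper), and integrating $|w\xi|^p$ over that event yields the lower bound directly. Your route is slightly more economical: it needs neither the normalization step (hence not Lemma \ref{lem:nonzero}), nor the reduction to $p\in(0,2]$, nor the separate $L^0$ proposition, and it delivers the explicit constant $C_1^{-1}=\delta\lambda(\eta/2)^{1/p}$ in one pass. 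What the paper's organization buys is the standalone $L^0$-Khintchine statement with explicit constants, which it then reuses in the remarks to sharpen the threshold on $s$.
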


If $w\equiv 1$ the result reduces the Khintchine inequalities \cite{Khin}.
Although the weighted version of the result is easy to prove, to our knowledge it was not known, and potentially useful for others.
We need a well-known $L^0$-version of Khintchine's inequality. We provide the details to obtain explicit constants.

\begin{proposition}\label{prop:KhintchineL0}
For all $a\in (0,1)$ and for all $(x_n)_{n\geq
1}$ in $\ell^2$, one has
\[\P\Big(\Big|\sum_{n\geq 1} r_n x_n \Big|>a\Big)\leq (1-a^2)^2/3
 \ \Rightarrow \ \sum_{n\geq 1} |x_n|^2\leq 1\]
\end{proposition}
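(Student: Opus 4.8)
The plan is to prove the contrapositive via a second-moment / Paley–Zygmund argument.
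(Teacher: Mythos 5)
Your one-line plan does point in exactly the direction the paper takes: the paper's proof of Proposition \ref{prop:KhintchineL0} also argues from the assumption $\sum_{n\geq 1} x_n^2>1$ and applies the Paley--Zygmund inequality. But as submitted this is only a statement of strategy, not a proof, and the step that actually produces the specific constant $(1-a^2)^2/3$ is entirely absent. To make the argument work you must supply two concrete ingredients. First, the normalization: writing $\xi=\bigl|\sum_{n\geq 1} r_n x_n\bigr|$, the hypothesis $\sum_{n\geq 1}x_n^2>1$ gives $\|\xi\|_2>1$, so that $\{\xi>a\|\xi\|_2\}\subseteq\{\xi>a\}$ and hence $\P(\xi>a)\geq\P(\xi>a\|\xi\|_2)$. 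Second, and crucially, the Paley--Zygmund inequality in the form
\[
\P(\xi>a\|\xi\|_2)\ \geq\ (1-a^2)^2\,\frac{(\E\xi^2)^2}{\E\xi^4}
\]
is useless without a quantitative lower bound on the moment ratio; here one needs the fourth-moment Khintchine inequality $\E\xi^4\leq 3(\E\xi^2)^2$ for Rademacher sums, which is precisely where the factor $1/3$ in the statement comes from. A generic ``second-moment argument'' with unspecified constants cannot yield that particular threshold.

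Once these are in place you get $\P(\xi>a)\geq(1-a^2)^2/3$ whenever $\sum_{n\geq 1}x_n^2>1$, which (up to the non-strict inequality at the boundary, resolved by applying the same bound after an arbitrarily small rescaling of the $x_n$) is the contrapositive of the claim. So: right strategy, same as the paper's, but you still owe the fourth-moment estimate and the actual computation.
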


We need the Paley-Zygmund inequality (see \cite[Corollary
3.3.2]{delaPGi}) which says that for a positive nonzero random variable $\xi:\O\to \R$ and $q\in (2, \infty)$ one has
\[\P(\xi>\lambda \|\xi\|_2) \geq \Big[(1-\lambda^2) \frac{\|\xi\|_2^2}{\|\xi\|_q^2}\Big]^{q/(q-2)} \ \ \lambda\in [0,1].\]

\begin{proof}
Assume $\sum_{n\geq 1} x_n^2> 1$.
Let $\xi = \Big|\sum_{n\geq 1} r_n x_n\Big|$ and $m := \|\xi\|_2>1$. Recall the following case of Khintchine's inequality: $\E\xi^4 \leq 3
(\E \xi^2)^2$ (see \cite[Section 1.3]{delaPGi}). Therefore, the Paley-Zygmund
inequality applied shows that
\[\P(\xi>a)\geq \P(\xi>a\|\xi\|_2) \geq (1-a^2)^2 \frac{(\E\xi^2)^2}{\E\xi^4} \geq (1-a^2)^2/3.
\]
\end{proof}

We will also need the following lemma.

\begin{lemma}\label{lem:nonzero}
Let $\eta = \sum_{n\geq 1} r_n x_n$, with $\sum_{n\geq 1} x_n^2\in (0,\infty)$. Then $\P(\eta=0)\leq 1- 2 e^{-2+\gamma}\approx 0.517$, where $\gamma$ is Euler constant.
\end{lemma}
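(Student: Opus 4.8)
The plan is to prove the equivalent statement $\P(\eta\neq 0)\geq 2e^{-2+\gamma}$ by feeding a sharp moment comparison into the Paley--Zygmund inequality and then optimising the auxiliary exponent. Write $\sigma=\|\eta\|_2=(\sum_{n\geq1}x_n^2)^{1/2}\in(0,\infty)$, so that $\eta$ is a nonzero symmetric random variable with finite moments of all orders. Applying the Paley--Zygmund inequality to the positive random variable $\xi=|\eta|$ at $\lambda=0$, and using $\||\eta|\|_2=\|\eta\|_2$ and $\||\eta|\|_q=\|\eta\|_q$, I would obtain for every $q\in(2,\infty)$
\[
\P(\eta\neq 0)=\P(|\eta|>0)\geq\Big(\frac{\|\eta\|_2^2}{\|\eta\|_q^2}\Big)^{q/(q-2)}.
\]

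Next I would insert the sharp upper Khintchine constant. For $q\geq2$ one has $\|\eta\|_q\leq C_q\,\|\eta\|_2$ with the optimal (Gaussian) value $C_q=\sqrt2\,(\Gamma(\tfrac{q+1}{2})/\sqrt\pi)^{1/q}$, i.e. the $L^q$-norm of a standard Gaussian; for $q=4$ this is precisely the inequality $\E\eta^4\leq3(\E\eta^2)^2$ used earlier. Substituting gives the clean family of bounds
\[
\P(\eta\neq 0)\geq C_q^{-2q/(q-2)}\qquad(q>2),
\]
each of which is a valid lower bound for the same fixed number $\P(\eta\neq0)$.

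The final step is to let $q\downarrow 2$ and evaluate the limit, which is where the best constant appears. Since $C_2=1$, the exponent is of the indeterminate form $\frac{2q}{q-2}\ln C_q$ with $\ln C_q\to0$, and a first-order expansion gives $\lim_{q\to2^+}\frac{2q}{q-2}\ln C_q=4\,\frac{d}{dq}\big|_{q=2}\ln C_q$. Differentiating $\ln C_q=\tfrac12\ln2+\tfrac1q(\ln\Gamma(\tfrac{q+1}{2})-\tfrac12\ln\pi)$ and using $\Gamma(\tfrac32)=\tfrac{\sqrt\pi}{2}$ together with the digamma value $\psi(\tfrac32)=2-\gamma-2\ln2$ — this is exactly where Euler's constant enters — yields $\frac{d}{dq}\big|_{q=2}\ln C_q=\frac{2-\gamma-\ln2}{4}$. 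Hence $\lim_{q\to2^+}C_q^{-2q/(q-2)}=e^{-(2-\gamma-\ln2)}=2e^{-2+\gamma}$, and passing to the limit in the family of bounds gives $\P(\eta\neq0)\geq2e^{-2+\gamma}$, that is, $\P(\eta=0)\leq1-2e^{-2+\gamma}$.

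I expect the main obstacle to be the quantitative $q\to2^+$ analysis: the target constant is attained only in the limit, so one must work with $q$ near $2$ rather than at a convenient value such as $q=4$ (which only yields $\P(\eta\neq0)\geq 1/3$), and this forces the use of the \emph{sharp} Gaussian Khintchine constant near $q=2$ rather than a crude one. Any non-optimal constant enlarges $\frac{d}{dq}\big|_{q=2}\ln C_q$ and thereby degrades the limit, so the sharpness is essential to reach $2e^{-2+\gamma}$. The accompanying checks (finiteness of $\|\eta\|_q$, applicability of Paley--Zygmund to the nonzero variable $|\eta|$, and passing from the bound for each $q$ to its limit) are routine.
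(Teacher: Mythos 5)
Your proposal is correct and follows essentially the same route as the paper: the Paley--Zygmund inequality combined with Haagerup's sharp Khintchine constant $B_q=\sqrt2\,(\Gamma(\tfrac{q+1}{2})/\sqrt\pi)^{1/q}$ and the limit $q\downarrow 2$, where $B_q^{-2q/(q-2)}\to 2e^{-2+\gamma}$. The only differences are cosmetic: you apply Paley--Zygmund directly at $\lambda=0$ instead of taking $\lambda\in(0,1)$ small, and you carry out explicitly (and correctly, via $\psi(\tfrac32)=2-\gamma-2\ln 2$) the digamma computation that the paper dismisses as ``an elementary calculation for $\Gamma$-functions.''
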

Note that for $\eta = r_1+r_2$ one has $\P(\eta = 0) = 1/2$, which shows that the lemma is close to optimal.
\begin{proof}
By scaling we can assume $\|\eta\|_2=1$. By the Paley-Zygmund inequality applied with $\xi=|\eta|$ together with the best constant in the Khintchine inequality (see \cite{Haag}) one sees that for all $\lambda\in (0,1)$ and $q>2$,
\[\P(|\eta|>\lambda) = \P(\xi>\lambda)\geq \Big[(1-\lambda^2) B_q^{-2}\Big]^{q/(q-2)},\]
where $B_q = \sqrt{2} \Big(\frac{\Gamma((p+1)/2)}{\sqrt{\pi}}\Big)^{1/q}$. An elementary calculation for $\Gamma$-functions shows that
$B_q^{-2q/(q-2)}\to 2 e^{-2+\gamma}$ as $q\downarrow 2$. Now the result follows by first taking $q>2$ arbitrary close to $2$ and then $\lambda$ small enough.
\end{proof}

\begin{proof}[Proof of Theorem \ref{thm:main}]
The second estimate follows from H\"older's inequality with $\frac1p = \frac1q + \frac1r$ and the unweighted Khintchine inequality with constant $k_{r,2}$:
\[
\|w \xi\|_p\leq \|w\|_{q} \|\xi\|_r\leq \|w\|_{q} k_{r,2} \Big(\sum_{n\geq 1} x_n^2\Big)^{\frac12}.
\]

Next we prove the first estimate. Since $\|w \xi\|_p$ increases in $p$, it
suffices to consider $p\in (0,2]$. If all the $x_n$ are zero, there is nothing to prove. If not, then by Lemma \ref{lem:nonzero} and the assumption we have $\P(w \xi\neq 0) = \P(w \neq 0, \xi\neq 0) >0$, and therefore $\|w\xi\|_p>0$. To complete the proof we can assume that $\|w\xi\|_p = 1$ as follows by a scaling argument. Moreover, by replacing $w$ by $|w|$ if necessary, we can assume that $w$ is nonnegative.

Choose $a\in (0,1)$ so small that $b = (1-a^2)^2/3>1-s$, where $s=\P(w\neq 0)$. (For example take $a$ such that $b=(1-a^2)^2/3=[(1-s) + 1/3]/2$). 
Let \[\delta_0 = \sup\{\delta>0: \P(w>\delta) \geq (s+1-b)/2\}.\]
Since $\P(w>0)=s>(s+1-b)/2$ we have $\delta_0>0$. Let $A = \{w\geq \delta_0\}$.
Then it follows that for all $t>0$
\begin{align*}
\P(\{|\xi|>t\}\cap A)& = \P(\one_A |\xi|>t) \leq t^{-p} \E(\one_A |\xi|^p)
\\ & \leq t^{-p} \delta_0^{-p} \E(w^p \one_A |\xi|^p) \leq t^{-p} \delta_0^{-p} \E(
|w \xi|^p)=t^{-p} \delta_0^{-p}.
\end{align*}
Therefore,
\[\P(\{|\xi|>t\}) \leq \P(\{|\xi|>t \cap A\} + \P(\O\setminus A) \leq t^{-p} \delta_0^{-p} + 1-(s+1-b)/2.\]
Now with $t = \delta_0^{-1}\Big(b-1+(s+1-b)/2\Big)^{-\frac1p}$ it follows that
$\P(\{|\xi|>t\} \leq b$. Let $y_n = \frac{a x_n}{t}$ and $\eta = \sum_{n\geq 1}
r_n y_n$. Then $\P(|\eta|>a)  = \P(\{|\xi|>t\}) \leq b$.
Therefore, Proposition \ref{prop:KhintchineL0} gives that $\sum_{n\geq 1} y_n^2
\leq 1$. In other words $\sum_{n\geq 1} x_n^2\leq  \frac{t^2}{a^2}$
and the result follows with $C_1 = a/t$.
\end{proof}


%

\begin{remark} \
\begin{enumerate}
\item A more sophisticated application of the Paley-Zygmund inequality in Proposition \ref{prop:KhintchineL0} shows that in the theorem it suffices to assume that $\P(w\neq 0)>1- 2 e^{-2+\gamma} \approx 0.517$. This is close to optimal as can be seen by taking $w = \one_{r_1+r_2 \neq 0}$ and $\xi = r_1+r_2$ for which the weighted inequality \eqref{eq:khintchineweigth} does not hold.

\item The integrability condition on $w$ used for the second estimate of \eqref{eq:khintchineweigth} can be improved. However, the general function space for $w$ is difficult to describe and not even rearrangement invariant (cf. \cite{Curb}).

\item With a similar technique one can obtain Theorem \ref{thm:main} for Gaussian random variables, $q$-stable random variables, etc.

\item The case where the $x_n$ take values in a normed space $X$, can also be considered. Then $\Big(\sum_{n\geq 1} x_n^2\Big)^{\frac12}$ has to be replaced by the $L^2$-norm $\|\xi\|_2$, where $\xi = \sum_{n\geq 1} r_n x_n$. Note that Lemma \ref{lem:nonzero} extends to this setting, as follows by applying Lemma \ref{lem:nonzero} with $\eta = \lb \xi, x^*\rb$ for a functional $x^*\in X^*$ for which $\lb \xi, x^*\rb$ is nonzero. Also the constants in Proposition \ref{prop:KhintchineL0} can be taken as before. This follows from the fact that also in the vector-valued setting $\|\xi\|_4 \leq 3^{1/4} \|\xi\|_2$ (see \cite{LatalaOlesunpub}).
\end{enumerate}
\end{remark}

\def\polhk#1{\setbox0=\hbox{#1}{\ooalign{\hidewidth
  \lower1.5ex\hbox{`}\hidewidth\crcr\unhbox0}}} \def\cprime{$'$}
\providecommand{\bysame}{\leavevmode\hbox to3em{\hrulefill}\thinspace}

\end{document}